\newtheorem{thm}{Theorem}[section]
\newtheorem{cor}[thm]{Corollary}
\newtheorem{lem}[thm]{Lemma}
\theoremstyle{definition}
\newtheorem{defn}[thm]{Definition}
\theoremstyle{remark}
\numberwithin{equation}{section}
\begin{document}

\title[Derivations on \text{FCIN} algebras]{Derivations on FCIN algebras}

\author{Asia Majeed \& Cenap \"{o}zel}
\thanks{}

\address{Department of Mathematics, CIIT, Islamabad, Pakistan \& AIBU Izzet Baysal
University Department of Mathematics Bolu / Turkey}

\email{asiamajeed@hotmail.com;cenap.ozel@gmail.com;cenap@ibu.edu.tr}

\keywords{Derivations, Commutative subspace lattices, FCIN algebras,
Ultra-weakly closed subalgebras}

\subjclass{47B47;47L35}

\thanks{}

\begin{abstract}
Let $\mathcal{L}$ be an algebra generated by the commuting
independent nests, $\mathcal{M}$ is an ultra-weakly closed
subalgebra of $\mathbf{B(H)}$ which contains $alg\mathcal{L}$ and
$\phi$ is a norm continuous linear mapping from $alg\mathcal{L}$
into $\mathcal{M}$. In this paper we will show that a norm
continuous linear derivable mapping at zero point from
$Alg\mathcal{L}$ to $\mathcal{M}$ is a derivation.
\end{abstract}

\maketitle
\commby{}
\section{Introduction.}

\begin{defn}
Let $\mathcal{A}$ be a subalgebra of $\mathbf{B(H)}$, let $\phi$ be
a linear mapping from $\mathcal{A}$ to $\mathbf{B(H)}$. We say that
$\phi$ is a \textit{derivation} if $\phi(AB)=\phi(A)B+A\phi(B)$ for
any $A,B$ $\in \mathcal{A}$.

We say that $\phi$ is a \textit{derivable mapping at the zero point}
if $\phi(AB) = \phi(A)B+A\phi(B)$ for any $A, B \in \mathcal{A}$
with $AB =0$.
\end{defn}
Several authors have studied linear mappings on operator algebras
are derivations. In \cite{baer1} Jing and Liu showed that every
derivable mapping $\phi$ at $0$ with $\phi(I)=0$ on nest algebras is
an inner derivation. In \cite{tropic,J} Zhu and Xiong proved that
every norm continuous generalized derivable mapping at $0$ on a
finite \text{CSL} algebra is a generalized derivation, and every
strongly operator topology continuous derivable mapping at the unit
operator $I$ in nest algebras is a derivation. It is natural and
interesting to ask whether or not a linear mapping is a derivation
if it is derivable only at one given point. An and Hou \cite{baer2}
investigated derivable mapping at $0, P$, and $I$ on triangular
rings, where $P$ is a fixed non-trivial idempotent. In \cite{S} Zhao
and Zhu characterized Jordan derivable mappings at $0$ and $I$ on
triangular algebras.

Now we will give some required definitions.
\begin{defn}
Let $\mathcal{L}$ be a lattice on a Hilbert space $\mathbf H$. If
$\mathcal{L}$ is generated by finitely many commuting independent
nests, it will be called by an $alg\mathcal{L}$  \text{FCIN}
algebra.
\end{defn}
Let $\mathcal{L}$ be a subspace lattice. For each projection $E \in
\mathcal{L}$, let
$$E_{-}= \bigvee{\{F:F \in \mathcal{L}, F\ngeq E}\} \quad
\text{and}\quad E_{*}= \bigvee{\{F_{-}:F \in \mathcal{L}, F\nleq
E}\}.$$

\begin{defn} A subspace lattice $\mathcal{L}$ is called
\bf{completely distributive} if $E_{*}= E, \forall E \in
\mathcal{L}$.
\end{defn}
\begin{defn}
If $\mathcal{L}$ is completely distributive and commutative, we will
call an $Alg\mathcal{L}$  \text{CDCSL} algebra.
\end{defn}
Throughout we consider $x, y$ be vectors in $\mathbf H$, we use
 notation $x\otimes y$ for rank one operators defined by $(x\otimes y )z = (z,x)y$ for all $z \in \mathbf H$.
Let $\mathcal{R}_{L}$ be the spanning space of rank one operators in
$Alg\mathcal{L}$. Laurie and Longstaff \cite{baer3} proved the
following result.
\begin{thm}
A commutative subspace lattice $\mathcal{L}$ is completely
distributive if and only if $\mathcal{R}_{L}$ is ultra-weakly dense
in $Alg\mathcal{L}$.
\end{thm}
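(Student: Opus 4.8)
The plan is to pass to duality, using that $\mathbf{B(H)}$ is the dual of the trace class $\mathcal{T}$ with the ultra-weak topology as its weak-$*$ topology. By the bipolar theorem, $\mathcal{R}_{L}$ is ultra-weakly dense in $Alg\mathcal{L}$ if and only if the two pre-annihilators in $\mathcal{T}$ coincide, $(\mathcal{R}_{L})_{\perp}=(Alg\mathcal{L})_{\perp}$. Since $\mathcal{R}_{L}\subseteq Alg\mathcal{L}$ always gives $(Alg\mathcal{L})_{\perp}\subseteq(\mathcal{R}_{L})_{\perp}$, the entire content is the reverse inclusion, and the theorem becomes the assertion that this reverse inclusion holds exactly when $\mathcal{L}$ is completely distributive.

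First I would record the classical characterization of the rank one operators in the algebra: for the convention $(x\otimes y)z=(z,x)y$, a direct invariance computation shows $x\otimes y\in Alg\mathcal{L}$ if and only if there is an $E\in\mathcal{L}$ with $y\in E\mathbf{H}$ and $x\in(I-E_{-})\mathbf{H}$. Combined with the trace identity $\Tr\bigl(T(x\otimes y)\bigr)=(Ty,x)$, this turns membership in $(\mathcal{R}_{L})_{\perp}$ into a transparent operator condition: a trace class $T$ annihilates every rank one operator of $Alg\mathcal{L}$ if and only if $(I-E_{-})TE=0$ for every $E\in\mathcal{L}$. Thus the problem is reduced to a purely lattice-theoretic comparison: decide when these ``local triangularity'' conditions $(I-E_{-})TE=0$ force the global condition $T\in(Alg\mathcal{L})_{\perp}$.

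For the easy direction (density $\Rightarrow$ complete distributivity) I would argue by contraposition. If $\mathcal{L}$ fails to be completely distributive, some $E_{0}$ satisfies $E_{0*}\neq E_{0}$, and the join defining $E_{0*}$ falls strictly below $E_{0}$, so $G=E_{0}-E_{0*}$ is a nonzero gap operator lying in $Alg\mathcal{L}$ (a difference of two elements of the complete lattice $\mathcal{L}$). Choosing a unit vector $\xi$ in the range of $G$ and setting $T=\xi\otimes\xi$, I would verify directly from the definitions of $E_{-}$ and $E_{*}$ that for each $E$ either $E\xi=0$ or $E_{-}\xi=\xi$, whence $(I-E_{-})TE=0$ for all $E$ and $T\in(\mathcal{R}_{L})_{\perp}$; on the other hand $\Tr(TG)=\norm{\xi}^{2}=1$, so $T\notin(Alg\mathcal{L})_{\perp}$. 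This separates the two pre-annihilators, so $\mathcal{R}_{L}$ is not dense.

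The hard direction (complete distributivity $\Rightarrow$ density) is the main obstacle, and it is genuinely a synthesis statement: I must show that $(I-E_{-})TE=0$ for all $E$ already forces $\Tr(TA)=0$ for every $A\in Alg\mathcal{L}$. The mechanism is that complete distributivity, $E_{*}=E$ for all $E$, lets me recover each $A\in Alg\mathcal{L}$ as an ultra-weak limit of finite sums adapted to the intervals determined by $E_{-}$, equivalently it should identify $(Alg\mathcal{L})_{\perp}$ with the trace-norm closed span of the gap operators $ES(I-E)$; the local conditions on $T$ then annihilate each such generator and, after a limiting argument over a suitable net of finite sublattices, annihilate all of $Alg\mathcal{L}$. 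The delicate point I expect to fight is precisely the passage from the pointwise (per-$E$) vanishing to the norm limit: complete distributivity is exactly what guarantees that no part of $A$ escapes the approximation, a non-distributive lattice leaving behind an uncovered gap of the kind exploited in the easy direction.
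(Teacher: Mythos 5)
The paper itself offers no proof of this statement; it is quoted verbatim from Laurie and Longstaff \cite{baer3}, so your proposal has to stand on its own. Your first two reductions are correct and are the standard opening moves: by the bipolar theorem, ultra-weak density of $\mathcal{R}_{L}$ in $Alg\mathcal{L}$ is equivalent to $(\mathcal{R}_{L})_{\perp}=(Alg\mathcal{L})_{\perp}$ in the trace class, and Longstaff's rank-one lemma ($x\otimes y\in Alg\mathcal{L}$ iff $y\in E\mathbf{H}$ and $x\in(I-E_{-})\mathbf{H}$ for some $E\in\mathcal{L}$) converts membership in $(\mathcal{R}_{L})_{\perp}$ into the condition $(I-E_{-})TE=0$ for all $E\in\mathcal{L}$. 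Your easy direction also works, after one correction of orientation: the paper's displayed definition of $E_{*}$ is garbled (as literally written, with a join, even a nest would fail $E_{*}=E$); the usable definition is $E_{*}=\bigwedge\{F_{-}:F\in\mathcal{L},\,F\nleq E\}$, and then $E\leq E_{*}$ always, since $F\nleq E$ forces $E\leq F_{-}$. So when complete distributivity fails the nonzero gap is $G=E_{0*}-E_{0}$, not $E_{0}-E_{0*}$. With a unit vector $\xi$ in the range of $G$ your dichotomy is exactly right: $E\leq E_{0}$ gives $E\xi=0$, while $E\nleq E_{0}$ gives $E_{-}\geq E_{0*}$ and hence $E_{-}\xi=\xi$; and you can take $A=I$ as the separating witness, $\Tr(T\,I)=1$, so you do not even need $G\in Alg\mathcal{L}$.

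The genuine gap is the hard direction, and it is not a small one. What you write there is a statement of the goal, not an argument. The identification of $(Alg\mathcal{L})_{\perp}$ with the trace-norm closed span of $\{ESE^{\perp}:E\in\mathcal{L},\ S\ \text{trace class}\}$ is fine (bipolar theorem again, valid for any subspace lattice), but the assertion that the conditions $(I-E_{-})TE=0$ push $T$ into that span \emph{is} the theorem; your proposed ``limiting argument over a suitable net of finite sublattices'' has no content for an infinite CSL, whose joins and meets are not exhausted by finite sublattices in any way that controls trace-norm approximation. More tellingly, nothing in your sketch uses commutativity of $\mathcal{L}$, yet that hypothesis is essential: for a general (non-commutative) completely distributive subspace lattice, ultra-weak density of $\mathcal{R}_{L}$ in $Alg\mathcal{L}$ is a long-standing open question, so any argument that never invokes commutativity cannot be correct. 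The actual proof of Laurie and Longstaff leans on the structure theory of commutative subspace lattices going back to Arveson (the measure-theoretic representation of a CSL), and any honest completion of your program would have to import machinery of that kind at precisely the point you flagged as ``delicate.''
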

\begin{defn}
Let $\mathcal{L}$ be a \text{CSL}. Then the von Neumann algebra
$(Alg\mathcal{L})\cap (Alg\mathcal{L})^{*}$ is called \bf{diagonal}
of $Alg\mathcal{L}$ and denoted by $D(\mathcal{L})$.
\end{defn}
Assume that $\mathcal{L}$ is generated by the commuting independent
$${\{E(Alg\mathcal{L})E^{\bot}: E\in \mathcal{L}}\}.$$
It is clear that $\mathcal{R}_{L}$ is a norm closed ideal of the
\text{CSL} algebra $Alg\mathcal{L}$.

Assume that $\mathcal{L}$ is generated by the commuting independent
nests $\mathcal{L}_{1}, \mathcal{L}_{2},...., \mathcal{L}_{n}$, then
$\mathcal{M}$ is an ultra-weakly closed subalgebra of
$\mathbf{B(H)}$ which contains $alg\mathcal{L}$, and $\phi$ is a
norm continuous linear mapping from $Alg\mathcal{L}$ into
$\mathcal{M}$.

\section{The Main Result.}

To prove the main result of this paper we require the following
Lemma from \cite{baer4}.
\begin{lem}
Let $\mathcal{L}$ be an arbitrary \text{CSL} on the complex
separable Hilbert space $\mathbf H$, and $\mathcal{M}$ be an
ultra-weakly closed subalgebra of $\mathbf{B(H)}$ which contains
$Alg\mathcal{L}$.\\ If $\phi: Alg\mathcal{L} \rightarrow
\mathcal{M}$ is a norm continuous linear mapping, then
$\phi(XAY)=\phi(XA)Y+X\phi(AY)-X\phi(A)Y$ for all $A$ in
$Alg\mathcal{L}$ and all $X, Y$ in
$\mathcal{D}(\mathcal{L})+\mathcal{R}(\mathcal{L})$.
\end{lem}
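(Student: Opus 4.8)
The plan is to exploit the only structural feature of $\phi$ available here, namely that it is derivable at the zero point: whenever $S,T\in Alg\mathcal{L}$ satisfy $ST=0$ one has $\phi(S)T+S\phi(T)=0$. (This is the property in force throughout the paper and is essential: a bare norm-continuous linear map, e.g. $A\mapsto f(A)C$ for a functional $f$, does not satisfy the asserted identity.) Since both sides of the claimed equation are separately linear in $X$ and in $Y$, and since $\mathcal{R}(\mathcal{L})$ is the linear span of the rank one operators it contains while $\mathcal{D}(\mathcal{L})$ is the norm-closed linear span of its projections, norm continuity of $\phi$ reduces the problem to the case in which each of $X$ and $Y$ is either a projection in $\mathcal{D}(\mathcal{L})$ or a single rank one operator $x\otimes y\in Alg\mathcal{L}$.

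The backbone of the argument is to prove the two one-sided Leibniz identities
\begin{equation}
\phi(XA)=X\phi(A)+\bigl(\phi(X)-X\phi(I)\bigr)A,\qquad
\phi(AX)=\phi(A)X+A\bigl(\phi(X)-\phi(I)X\bigr)
\end{equation}
for every $A\in Alg\mathcal{L}$ and every such building block $X$. Granting these, the target follows by substitution: expand $\phi(XAY)=\phi\bigl(X(AY)\bigr)$ by the first identity and then the inner $\phi(AY)$ by the second, and expand the right-hand side $\phi(XA)Y+X\phi(AY)-X\phi(A)Y$ the same way. Both reduce to the single combination of $\phi(X)AY$, $X\phi(A)Y$, $XA\phi(Y)$, $X\phi(I)AY$ and $XA\phi(I)Y$, so they agree; in particular the $\phi(I)$-terms and the surplus $X\phi(A)Y$ match automatically. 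Thus everything is reduced to establishing the displayed module identities.

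To prove them I would treat the two types of building block separately. For a lattice-type factor I use that, $\mathcal{L}$ being commutative, the complement $E^{\bot}$ of each $E\in\mathcal{L}$ again lies in $Alg\mathcal{L}$, and that the invariance $AE=EAE$ for $A\in Alg\mathcal{L}$ gives the vanishing products $EE^{\bot}=E^{\bot}E=0$ and $E^{\bot}AE=0$. Feeding these, together with the decomposition $A=EA+E^{\bot}A$, into the derivable-at-zero relation produces linear equations for $\phi(EA)$ and $\phi(E^{\bot}A)$ that yield the identities for $E$ and $E^{\bot}$; since these hold for \emph{arbitrary} $A$, they propagate to products (taking $A=X_2$ gives the diagonal Leibniz rule) and then to norm limits, hence to all of $\mathcal{D}(\mathcal{L})$. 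For a rank one factor $x\otimes y$ I use the multiplication rule $(x\otimes y)(u\otimes v)=(v,x)\,u\otimes y$ and its one-sided analogues $(x\otimes y)A=(A^{*}x)\otimes y$, $A(x\otimes y)=x\otimes(Ay)$: choosing the probing vectors $u,v$ so as to force selected products to vanish, the derivable-at-zero relation isolates the terms on the right-hand side, and the ultra-weak density of $\mathcal{R}(\mathcal{L})$ in $Alg\mathcal{L}$ (the Laurie--Longstaff theorem) together with norm continuity passes the resulting relations to arbitrary $A$.

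The main obstacle is precisely this last step: engineering enough vanishing products to separate the three summands $X\phi(A)$, $\phi(X)A$ and $X\phi(I)A$ without leaving $Alg\mathcal{L}$. The orthogonality needed to annihilate a product severely constrains the admissible probes, so one is forced to split each test vector into components parallel and orthogonal to the relevant vectors $x$, apply the relation on each piece, and recombine. Keeping every intermediate operator inside $Alg\mathcal{L}$ (automatic for the diagonal complements $E^{\bot}$, delicate for rank ones) and justifying the passage from the generating lattice projections to the full diagonal von Neumann algebra by norm continuity are the points that require the most care.
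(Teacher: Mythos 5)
Your opening diagnosis is correct, and it is the most valuable part of the proposal: as printed the lemma is false for a general norm-continuous linear map. (Concretely: for the CSL $\mathcal{L}=\{0,I\}$ on $\mathbb{C}^{2}$ one has $Alg\mathcal{L}=\mathcal{D}(\mathcal{L})=\mathbf{B(H)}$, and $\phi(A)=\operatorname{tr}(A)I$ violates the identity at $X=Y=e_{1}\otimes e_{1}$, $A=I$.) Be aware, however, that the paper contains no proof to compare yours against: the lemma is imported verbatim from \cite{baer4}, and in that source the hypothesis is that $\phi$ is a norm-continuous \emph{local derivation} (that is the subject of that paper); it is neither a bare linear map, as misquoted here, nor a map derivable at zero, which is your repair. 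So what you set out to prove is a third statement, different from both. That reading is defensible in the context of this paper, whose $\phi$ is derivable at zero, but then the entire burden of proof is yours, and your sketch has two genuine gaps.

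First, the diagonal part. You derive the module identities only for projections $E\in\mathcal{L}$ and their complements, and then assert that they propagate ``to products and then to norm limits, hence to all of $\mathcal{D}(\mathcal{L})$''. This fails: $\mathcal{D}(\mathcal{L})=Alg\mathcal{L}\cap(Alg\mathcal{L})^{*}$ is exactly the commutant of $\mathcal{L}$, which is in general enormously larger than the von Neumann algebra generated by the lattice projections --- for $\mathcal{L}=\{0,I\}$ it is all of $\mathbf{B(H)}$, while your generators give only $\mathbb{C}I$. The restriction to lattice projections is also unnecessary: for an \emph{arbitrary} projection $P\in\mathcal{D}(\mathcal{L})$ both $P$ and $I-P$ lie in $Alg\mathcal{L}$ and $P(I-P)=(I-P)P=0$, so applying the zero-product relation to the pairs $\bigl(P,(I-P)A\bigr)$ and $\bigl(I-P,PA\bigr)$ yields $\phi(PA)=P\phi(A)+\phi(P)A-\phi(I)PA$ for every $A\in Alg\mathcal{L}$. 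Note the order $\phi(I)P$ here: your version $P\phi(I)$ is not what the computation produces, and $\phi(I)$ (an element of $\mathcal{M}$, not of the diagonal) need not commute past $P$; the analogous right-hand identity ends in $-AP\phi(I)$. Second, the rank-one part. You pass from the identity on rank-one data to arbitrary $A$ via Laurie--Longstaff density. That theorem requires $\mathcal{L}$ to be completely distributive, whereas the lemma concerns an arbitrary CSL --- if $\mathcal{L}$ is the projection lattice of a continuous masa, $Alg\mathcal{L}$ contains no nonzero rank-one operator at all --- and even where it applies it gives \emph{ultra-weak} density, which cannot be combined with mere norm continuity of $\phi$. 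The workable route is the reverse of yours: once the one-sided identities hold for all of $\mathcal{D}(\mathcal{L})$, the target identity follows whenever $X\in\mathcal{D}(\mathcal{L})$ (with $Y$ arbitrary) or $Y\in\mathcal{D}(\mathcal{L})$ (with $X$ arbitrary), so by bilinearity only the case where $X$ and $Y$ are both rank one remains, and that case must be handled for each fixed $A\in Alg\mathcal{L}$ directly by zero-product arguments with $x\otimes y$, not by density in the variable $A$.
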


\begin{lem}\label{abbas}
Let $A \in Alg\mathcal{L}$ and $B\in
\mathcal{D}(\mathcal{L})+\mathcal{R}(\mathcal{L})$. If $AB
\in\mathcal{D}(\mathcal{L})+\mathcal{R}(\mathcal{L})$, then
$\phi(AB)=\phi(A)B+A\phi(B)$.
\end{lem}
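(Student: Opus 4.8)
The plan is to phrase everything through the bilinear defect $\psi(C,D):=\phi(CD)-\phi(C)D-C\phi(D)$, so that the assertion to be proved is simply $\psi(A,B)=0$ and the standing hypothesis that $\phi$ is derivable at zero becomes the statement $\psi(C,D)=0$ whenever $CD=0$. The first step is to repackage the auxiliary lemma taken from \cite{baer4}: substituting its identity into $\psi(XA,Y)$ makes all the $\phi(XAY)$ terms cancel and leaves the clean module rule $\psi(XA,Y)=X\psi(A,Y)$, and inserting this into the (purely algebraic, always valid) cocycle relation $\psi(X,AY)+X\psi(A,Y)=\psi(XA,Y)+\psi(X,A)Y$ produces the companion rule $\psi(X,AY)=\psi(X,A)Y$; both hold for all $X,Y\in\mathcal{D}(\mathcal{L})+\mathcal{R}(\mathcal{L})$ and all $A\in Alg\mathcal{L}$. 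These two rules let me transport a defect across a general factor $A$ as long as the outer factors sit in $\mathcal{D}(\mathcal{L})+\mathcal{R}(\mathcal{L})$; the only genuinely new input will be the vanishing products furnished by derivability at zero.

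Next I would reduce by linearity. Writing $B=D+R$ with $D\in\mathcal{D}(\mathcal{L})$ and $R\in\mathcal{R}(\mathcal{L})$, and using that $\mathcal{R}(\mathcal{L})$ is a norm-closed ideal of $Alg\mathcal{L}$, the product $AR$ lies in $\mathcal{R}(\mathcal{L})$ automatically, so the hypothesis $AB\in\mathcal{D}(\mathcal{L})+\mathcal{R}(\mathcal{L})$ is equivalent to $AD\in\mathcal{D}(\mathcal{L})+\mathcal{R}(\mathcal{L})$. Since $\psi(A,\cdot)$ is additive, it suffices to prove $\psi(A,R)=0$ for every $R\in\mathcal{R}(\mathcal{L})$ (with no side condition) and $\psi(A,D)=0$ for $D\in\mathcal{D}(\mathcal{L})$ under the constraint $AD\in\mathcal{D}(\mathcal{L})+\mathcal{R}(\mathcal{L})$.

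For the ideal part I would first take $R=x\otimes y$ rank one in $Alg\mathcal{L}$ and use the Longstaff description to choose $E\in\mathcal{L}$ with $Ey=y$ and $E_{-}x=0$. This gives the vanishing products $A(I-E)\cdot R=0$ and $(AR)\cdot E_{-}=0$. Derivability at zero applied to the first yields $\phi(A(I-E))R=-A(I-E)\phi(R)$, which rearranges (using $AER=AR$) to $\psi(A,R)=\psi(AE,R)$; since $AE=EAE$, the module rule with $X=E$ then localizes the defect as $\psi(A,R)=E\psi(A,R)$, while derivability applied to $(AR)E_{-}=0$ (together with $RE_{-}=0$) gives $\psi(A,R)E_{-}=0$. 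Thus $\psi(A,R)$ is trapped in the corner $E\,\mathcal{M}\,(I-E_{-})$, and a final use of the rank-one structure and the two module rules collapses this corner to $0$; norm continuity of $\phi$ and the fact that $\mathcal{R}(\mathcal{L})$ is the norm closure of the rank-one span extend the identity to all $R\in\mathcal{R}(\mathcal{L})$.

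The diagonal part is where I expect the real difficulty, and it is the step I would treat last and most carefully. Here the clean single projection of the rank-one case is unavailable: a general $D\in\mathcal{D}(\mathcal{L})=\mathcal{L}''$ is not a combination of projections of the lattice $\mathcal{L}$ itself, and for a general $A$ even $AP$ need not lie in $\mathcal{D}(\mathcal{L})+\mathcal{R}(\mathcal{L})$, so one cannot reduce $D$ to lattice projections and the constraint $AD\in\mathcal{D}(\mathcal{L})+\mathcal{R}(\mathcal{L})$ must be used in an essential way rather than discarded. The plan is to manufacture vanishing products from the defining relation $(I-P)TP=0$ $(T\in Alg\mathcal{L},\,P\in\mathcal{L})$, feed them into derivability at zero, and then cancel the terms not already controlled by the constraint using the two module rules, decomposing along the finitely many commuting independent nests that generate $\mathcal{L}$ and passing to the closure by norm continuity. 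Balancing the terms produced by the zero-product relations against the part of $AD$ that the constraint places inside $\mathcal{D}(\mathcal{L})+\mathcal{R}(\mathcal{L})$ is, I expect, the crux of the whole argument.
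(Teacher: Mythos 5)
You should know at the outset that the paper contains \emph{no} proof of Lemma \ref{abbas} --- it is stated bare and then invoked in the proof of the main theorem --- so your attempt can only be judged on its own merits. The parts you actually carry out are correct: the module rules $\psi(XA,Y)=X\psi(A,Y)$ and $\psi(X,AY)=\psi(X,A)Y$ do follow from the cited Lemma 2.1 together with the (always valid) cocycle identity; the reduction $B=D+R$ via the ideal property of $\mathcal{R}(\mathcal{L})$ is fine; and for $R=x\otimes y$ with $Ey=y$ and $E_{-}x=0$, the localization $\psi(A,R)=E\psi(A,R)$ and $\psi(A,R)E_{-}=0$ is a correct computation.

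The two steps you leave open, however --- the ``collapse'' of the corner $E\mathcal{M}(I-E_{-})$ and the entire diagonal case --- cannot be filled in, because the lemma as stated is false. Consider the identity map $\phi(X)=X$. It is linear, norm continuous, and derivable at zero (if $AB=0$ then $\phi(A)B+A\phi(B)=2AB=0=\phi(AB)$), and it satisfies the conclusion of Lemma 2.1, hence both of your module rules; yet its defect is $\psi(A,B)=-AB$. Taking $A=B=I\in\mathcal{D}(\mathcal{L})$ in the lemma would give $I=2I$; taking $A=I$ and $B=R$ a nonzero rank one would give $R=\phi(I)R=0$. Note moreover that $-AR$ lies exactly in the corner $E\mathcal{M}(I-E_{-})$ in which you trapped the defect, so it is consistent with every constraint you derived: no further combination of Lemma 2.1, cocycle manipulations, and zero products can force the defect to vanish. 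Structurally, the class of norm continuous maps derivable at zero is stable under $\phi\mapsto\phi+\lambda\,\mathrm{id}$, while the conclusion $\psi(A,B)=0$ is not, so no argument from these hypotheses alone can exist. The statement needs the additional hypothesis $\phi(I)=0$ (as in Jing--Lu--Li \cite{baer1}), or else its conclusion must be weakened to the generalized derivation identity $\phi(AB)=\phi(A)B+A\phi(B)-A\phi(I)B$ (as in Zhu--Xiong \cite{tropic}). Your suspicion that the diagonal case is the crux is confirmed in the strongest possible way: with $A=B=I$ it already demands $\phi(I)=0$, which simply does not follow from the stated hypotheses. (The same example refutes the paper's main theorem as stated, so the defect here is the paper's, not merely your proposal's.)
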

\begin{cor}
 $\phi(XY)=\phi(X)Y+X\phi(Y)$ for all~ $X, Y \in$
$\mathcal{D}(\mathcal{L})+\mathcal{R}(\mathcal{L})$.
\end{cor}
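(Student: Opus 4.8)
The plan is to derive the corollary directly from Lemma \ref{abbas}, for which the only thing to check is that the product $XY$ again lands in $\mathcal{D}(\mathcal{L})+\mathcal{R}(\mathcal{L})$. Once that is established, setting $A=X$ and $B=Y$ in Lemma \ref{abbas} is legitimate: we have $X\in\mathcal{D}(\mathcal{L})+\mathcal{R}(\mathcal{L})\subseteq Alg\mathcal{L}$, $Y\in\mathcal{D}(\mathcal{L})+\mathcal{R}(\mathcal{L})$, and the hypothesis $AB\in\mathcal{D}(\mathcal{L})+\mathcal{R}(\mathcal{L})$ holds. The conclusion of the lemma then reads exactly $\phi(XY)=\phi(X)Y+X\phi(Y)$, which is what is claimed.

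To verify closure under multiplication, I would write $X=D_1+R_1$ and $Y=D_2+R_2$ with $D_1,D_2\in\mathcal{D}(\mathcal{L})$ and $R_1,R_2\in\mathcal{R}(\mathcal{L})$, and expand
$$XY=D_1D_2+D_1R_2+R_1D_2+R_1R_2.$$
Here $D_1D_2\in\mathcal{D}(\mathcal{L})$ because the diagonal $\mathcal{D}(\mathcal{L})=(Alg\mathcal{L})\cap(Alg\mathcal{L})^{*}$ is a von Neumann algebra, hence closed under products. For the remaining three terms I would invoke the fact, recorded earlier in the excerpt, that $\mathcal{R}(\mathcal{L})$ is a (norm closed) ideal of $Alg\mathcal{L}$: since $D_1,D_2\in\mathcal{D}(\mathcal{L})\subseteq Alg\mathcal{L}$ and $R_1,R_2\in\mathcal{R}(\mathcal{L})$, each of $D_1R_2$, $R_1D_2$, and $R_1R_2$ is a product of an element of $Alg\mathcal{L}$ with an element of the ideal $\mathcal{R}(\mathcal{L})$, hence lies in $\mathcal{R}(\mathcal{L})$. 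Collecting terms gives $XY=D_1D_2+(D_1R_2+R_1D_2+R_1R_2)\in\mathcal{D}(\mathcal{L})+\mathcal{R}(\mathcal{L})$, as required.

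Because this closure step uses only the algebra structure of the diagonal and the ideal property of $\mathcal{R}(\mathcal{L})$, there is no genuine obstacle: the corollary is essentially a bookkeeping consequence of Lemma \ref{abbas}. The single point that merits a line of care is confirming that $\mathcal{D}(\mathcal{L})+\mathcal{R}(\mathcal{L})$ is contained in $Alg\mathcal{L}$, so that $X$ qualifies as the element $A\in Alg\mathcal{L}$ required by the lemma; this is immediate since both summands lie in $Alg\mathcal{L}$.
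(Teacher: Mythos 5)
Your proof is correct and matches the paper's (implicit) argument: the corollary is stated as an immediate consequence of Lemma \ref{abbas}, and your verification that $\mathcal{D}(\mathcal{L})+\mathcal{R}(\mathcal{L})$ is closed under multiplication --- using that the diagonal is a von Neumann algebra and that $\mathcal{R}(\mathcal{L})$ is an ideal of $Alg\mathcal{L}$ --- is exactly the bookkeeping needed to apply the lemma with $A=X$, $B=Y$.
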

So we are ready to prove the main result of this work.
\begin{thm}
Let $\mathcal {L}$ be a commutative subspace lattice generated by
finitely many independent nests, and $\mathcal {M}$ be any
ultra-weakly closed subalgebra of $\mathbf{B(H)}$ on $\mathbf H$,
which contains $Alg\mathcal{L}$. Let $\phi$ be a norm continuous
linear derivable mapping at the zero point from $Alg\mathcal{L}$ to
$\mathcal{M}$. Then $\phi$ is a derivation.
\end{thm}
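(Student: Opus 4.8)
The plan is to reduce the derivation identity on all of $Alg\mathcal{L}$ to the identity already established on $\mathcal{D}(\mathcal{L})+\mathcal{R}(\mathcal{L})$ by multiplying on the right by rank one operators, and then to remove these rank one operators using the ultra-weak density of $\mathcal{R}_{L}$. Fix $A,B\in Alg\mathcal{L}$ and set
\[
T=\phi(AB)-\phi(A)B-A\phi(B)\in\mathcal{M};
\]
the goal is to prove $T=0$. The first step records a Leibniz rule against rank one operators: for every $C\in Alg\mathcal{L}$ and every $R\in\mathcal{R}_{L}$ we have $CR\in\mathcal{R}_{L}$, since $\mathcal{R}_{L}$ is a norm closed ideal of $Alg\mathcal{L}$. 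Thus $R\in\mathcal{R}(\mathcal{L})\subseteq\mathcal{D}(\mathcal{L})+\mathcal{R}(\mathcal{L})$ and also $CR\in\mathcal{R}(\mathcal{L})\subseteq\mathcal{D}(\mathcal{L})+\mathcal{R}(\mathcal{L})$, so Lemma~\ref{abbas}, applied with its $B$ replaced by $R$, gives
\[
\phi(CR)=\phi(C)R+C\phi(R)\qquad(C\in Alg\mathcal{L},\ R\in\mathcal{R}_{L}).
\]

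The second step is a short algebraic comparison exploiting associativity. For $R\in\mathcal{R}_{L}$ both $BR$ and $ABR$ lie in $\mathcal{R}_{L}$, so the rank one Leibniz rule may be applied to the three pairs $(AB,R)$, $(A,BR)$ and $(B,R)$. Expanding $\phi\bigl((AB)R\bigr)=\phi\bigl(A(BR)\bigr)$ and substituting $\phi(BR)=\phi(B)R+B\phi(R)$, the common terms $AB\phi(R)$ cancel, leaving
\[
\phi(AB)R=\bigl(\phi(A)B+A\phi(B)\bigr)R,
\]
that is, $TR=0$ for every $R\in\mathcal{R}_{L}$.

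The final step passes from $TR=0$ to $T=0$, and this is where the structural hypotheses on $\mathcal{L}$ are used in an essential way. Being generated by finitely many commuting independent nests, $\mathcal{L}$ is a completely distributive CSL, so by the Laurie--Longstaff theorem $\mathcal{R}_{L}$ is ultra-weakly dense in $Alg\mathcal{L}$. Since $I\in Alg\mathcal{L}$, there is a net $(R_{\alpha})\subseteq\mathcal{R}_{L}$ with $R_{\alpha}\to I$ ultra-weakly. Left multiplication by the fixed bounded operator $T$ is ultra-weakly continuous, whence $T=TI=\lim_{\alpha}TR_{\alpha}=0$. This gives $\phi(AB)=\phi(A)B+A\phi(B)$ for all $A,B\in Alg\mathcal{L}$, so $\phi$ is a derivation.

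I expect the only genuine subtlety to be this last step. Because $\phi$ is merely norm continuous and \emph{not} ultra-weakly continuous, one cannot approximate $A$ or $B$ by rank one operators inside $\phi$ and pass to the limit. The device that avoids this difficulty is to prove the \emph{exact} vanishing $TR=0$ first, and only afterwards to invoke ultra-weak density, applied to the product $TR$ where left multiplication is continuous, rather than inside $\phi$ itself.
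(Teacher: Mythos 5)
Your algebraic core (steps one and two) is fine: \emph{granting} that Lemma~\ref{abbas} applies when the right-hand factor is a rank one operator, comparing the two expansions of $\phi\bigl((AB)R\bigr)=\phi\bigl(A(BR)\bigr)$ does give $\bigl[\phi(AB)-\phi(A)B-A\phi(B)\bigr]R=0$, and it is legitimate to take an ultra-weak limit \emph{outside} $\phi$. The genuine gap is the linchpin of your third step: the unsupported assertion that a CSL generated by finitely many commuting independent nests is completely distributive, so that the Laurie--Longstaff theorem yields ultra-weak density of $\mathcal{R}_{L}$. This is not obvious, is nowhere established in the paper, and is precisely the kind of input the paper's own proof is engineered to avoid. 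Already for the lattice generated by two commuting independent \emph{continuous} nests, complete distributivity is equivalent (via Laurie--Longstaff) to ultra-weak rank-one density in $Alg\mathcal{L}$, which amounts to the tensor product formula for nest algebras together with the Erd\H{o}s density theorem in each factor --- nontrivial theorems (Kraus, Hopenwasser--Laurie) that you would at minimum have to cite, and which do not immediately cover commuting independent nests that are not spatially a tensor product. Without this input, nothing in your argument rules out that $Alg\mathcal{L}$ has too few rank ones for the limit $R_{\alpha}\to I$ to exist inside $\mathcal{R}_{L}$. A secondary inaccuracy: you claim $R\in\mathcal{R}(\mathcal{L})$ for $R\in\mathcal{R}_{L}$, conflating the rank-one span with the norm-closed ideal generated by the corners $E(Alg\mathcal{L})E^{\perp}$ that appears in Lemma~\ref{abbas}. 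That containment is false in general --- a rank one $x\otimes y$ with both vectors in an atom $E\wedge(E_{-})^{\perp}$ lies in the diagonal $\mathcal{D}(\mathcal{L})$, not in $\mathcal{R}(\mathcal{L})$ --- and the statement you actually need, $\mathcal{R}_{L}\subseteq\mathcal{D}(\mathcal{L})+\mathcal{R}(\mathcal{L})$, requires a decomposition argument you do not supply.

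For contrast, the paper never touches rank ones or complete distributivity. It splits into cases according to $\Omega=\{i:I_{-}=I\in\mathcal{L}_{i}\}$ and manufactures multipliers directly from the nests: when $\Omega=\Phi$ it uses $N=\prod_{i}Q_{i}^{\perp}$ with $\mathbf{B(H)}N\subseteq\mathcal{D}(\mathcal{L})+\mathcal{R}(\mathcal{L})$; when $\Omega\neq\Phi$ it uses $M=\prod_{i\notin\Omega}Q_{i}^{\perp}$ and sequences $E_{k}=\prod_{i\in\Omega}P_{i,k}$, $F_{k}=\prod_{i\in\Omega}P_{i,k}^{\perp}$ with $P_{i,k}\nearrow I$, for which $E_{k}\mathbf{B(H)}MF_{k}\subseteq\mathcal{R}(\mathcal{L})$ holds by construction. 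The same double expansion you perform then gives $\bigl[\phi(AB)-\phi(A)B-A\phi(B)\bigr]TN=0$, respectively $\bigl[\phi(AB)-\phi(A)B-A\phi(B)\bigr]E_{k}TMF_{k}=0$, for all $T\in\mathbf{B(H)}$, and the conclusion follows from independence of the nests ($MF_{k}\neq0$) and the strong convergence $E_{k}\to I$ --- all consequences of the FCIN structure itself. To salvage your route you would have to prove, or properly cite, complete distributivity of FCIN lattices and prove $\mathcal{R}_{L}\subseteq\mathcal{D}(\mathcal{L})+\mathcal{R}(\mathcal{L})$; as written, the proof is incomplete.
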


\begin{proof}
Let $\phi: Alg\mathcal{L}\rightarrow\mathcal{M}$ be a norm
continuous derivable linear mapping. Then we just need to prove that
$\phi$ is a derivation. Let $\Omega=\{i:I_{-}=I~
\in~\mathcal{L}_{i}\}$. Then we have the following cases.
\par
When $\Omega=\Phi$, for each i=1, 2,....., n,  let $Q_{i}=I_{-}$ be
the projection in $\mathcal{L}_{i}$ and
N=$\prod_{i=1}^{n}Q_{i}^{\bot}$, then $\mathbf{B(H)}N
\subset\mathcal{D}(\mathcal{L})+\mathcal{R}(\mathcal{L})$. Let $A, B
\in alg\mathcal{L}$, it follows from Lemma \ref{abbas} that
\begin{eqnarray}
\phi(ABTN)=\phi(AB)TN+AB\phi(TN)\nonumber
\end{eqnarray}
for all $T \in B(H)$. On the other hand, we have from Lemma
\ref{abbas} again
\begin{eqnarray}
\phi(ABTN)&=&\phi(A)BTN+A\phi(BTN)\nonumber\\
&=&\phi(A)BTN+A[\phi(B)TN+B\phi(TN)]\nonumber\\
&=&\phi(A)BTN+A\phi(B)TN+AB\phi(TN).\nonumber
\end{eqnarray}
The last two equations give us that
$[\phi(AB)-\phi(A)B-A\phi(B)]TN=0$ for all $T \in \mathbf{B(H)}$.
Thus we have
\begin{eqnarray}
\phi(AB)=\phi(A)B+A\phi(B).\nonumber
\end{eqnarray}

\par
 When $\Omega\neq\Phi$, for each $i \notin \Omega$, let
$Q_{i}=I_{-}$ be the projection in $\mathcal{L}_{i}$, define $M =
\Pi_{i\notin\Omega}Q_{i}^{\perp}$ (If $\Omega=\{1, 2,..., n\}$, we
take $M=I$). For each $i \in\Omega$, there exists an increasing
sequence $P_{i, k}$ of projections in $\mathcal{L}_{i}
\setminus\{I\}$ which strongly converges to $I$. Let
$E_{k}=\Pi_{i\in\Omega}P_{i, k}$ and $F_{k}=\Pi_{i\in\Omega}P_{i,
k}^{\bot}$. Then $\lim_{k\rightarrow\infty}E_{k}=I$. It is clear
that $E_{k}B(H)MF_{K}\subset\mathcal{R}(\mathcal{L})$ for all $k
\in\mathbb{N}$. Let $A, B \in Alg\mathcal{L}$, it follows from Lemma
\ref{abbas} that
\begin{eqnarray}
\phi(ABE_{k}TMF_{k})=\phi(AB)E_{k}TMF_{k}+AB\phi(E_{k}TMF_{k})\nonumber
\end{eqnarray}
for all $T\in \mathbf{B(H)}$ and $k \in \mathbb{N}$. On the other
hand, by Lemma \ref{abbas} we have
\begin{eqnarray}
\phi(ABE_{k}TMF_{k})&=&\phi(A)BE_{k}TMF_{k}+A\phi(BE_{k}TMF_{k})\nonumber\\
&=&\phi(A)BE_{k}TMF_{k}+A[\phi(B)E_{k}TMF_{k}+B\phi(E_{k}TMF_{k})]\nonumber\\
&=&\phi(A)BE_{k}TMF_{k}+A\phi(B)E_{k}TMF_{k}+AB\phi(E_{k}TMF_{k}).\nonumber
\end{eqnarray}
From the last two equations we have
$[\phi(AB)-\phi(A)B-A\phi(B)]E_{k}TMF_{k}=0$ for all $T\in
\mathbf{B(H)}$ and $k \in \mathbb{N}$. By independence of the nests
$\mathcal{L}_{i}, MF_{k}\neq 0$ for all $k\in \mathbb{N}$. Hence
\begin{eqnarray}
[\phi(AB)-\phi(A)B-A\phi(B)]E_{k}=0\nonumber
\end{eqnarray}
for all $k \in\mathbb{N}$. Letting $k\rightarrow\infty$, we have
that $\phi(AB)=\phi(A)B+A\phi(B)$. Hence $\phi$ is a derivation,
namely $0$ is a derivable point of $alg\mathcal{L}$ for norm
continuous linear mapping. This completes the proof.
\end{proof}

\end{document}